\documentclass[]{article}
\usepackage[
    colorlinks=true,%
    breaklinks,
    linkcolor=black,%
    anchorcolor=black,%
    citecolor=black,%
    filecolor=black,%
    menucolor=black,%
    pagecolor=black,%
    urlcolor=black,%
    hyperfigures,
    bookmarksnumbered%
]{hyperref}
\usepackage{natbib}
\usepackage{amsmath,amssymb,latexsym,hyperref,url,graphicx,amsthm}
\usepackage[letterpaper,margin=3.5cm]{geometry}
\usepackage{afterpage}

\usepackage[usenames,dvipsnames]{xcolor} 

\newtheorem{definition}{Definition}

\newtheorem{lemma}{Lemma}
\newtheorem{notation}{Notation}

\newtheorem{remark}{Remark}

\newtheorem{assumption}{Assumption}





\title{An Analysis of Pseudo-Goodwin Cycles \\ in a Wage-Led Minsky Model}
\author{Johannes Buchner}

\begin{document}

\maketitle

\begin{abstract}
The goal of these notes is to make mathematically more precise some of the ideas presented in~\citet{Stockhammer2014}. At first the title seems like a contradiction to have a wage-led model and still find goodwin cycles in it, but the point we try to make in the paper is that those are only `pseudo-goodwin' cycles, and not real goodwin cycles.
\end{abstract}

\section{Acknowledgements}

This paper is based on discussions with Charlie Brummit, who I am deeply indebted, especially for performing the simulations. I have coined the name "pseudo goodwin cycles" in reference to "pseudo trajectories", which play an important role in numerical mathematics of dynamical systems. I would like to thank Engelbert Stockhammer, who introduced me to the topic.

\section{Definitions}

\subsection{Variables}
\begin{enumerate}
\item Time $t \in \mathbb{R}$ is continuous.
\item \emph{Output} of the economy is denoted by $y(t) \geq 0$.
\item A portion $w(t)$ of output $y(t)$ is distributed to \emph{wages}. [The quantity $w(t)$ will also be called the \emph{distribution}.]. The remainder, $y(t) - w(t)$, is \emph{profits}. 
\item Financial fragility $f(t)$ is the ratio of debt-to-income of firms. 
\end{enumerate}

\begin{notation}
Derivatives with respect to time, $d/dt$, will denoted by an overdot, such as $\dot y(t)  \equiv d y(t) / d t$. For short, we will just write $ \dot y$.
\end{notation}
\begin{notation}
We will denote the nonnegative real numbers by $\mathbb{R}_+ \equiv \{x \in \mathbb{R} : x \geq 0\}$.
\end{notation}

\subsection{Differential equations}

We will consider a system of three coupled ordinary differential equations (ODEs) of the form
\begin{subequations}
\begin{align}
\dot y &= F(y,w,f)\\
\dot w &= G(y,w,f)\\
\dot f &= H(y,w,f),
\end{align}
\label{eq:general_diff_eq}
\end{subequations}
where $F,G,H : \mathbb{R}_+^3 \to \mathbb{R}$ are ``sufficiently nice'' functions to be specified later.
Note that System~\eqref{eq:general_diff_eq} is \emph{autonomous} [meaning that $f, g, h$ do not depend explicitly on time $t$; instead they only depend on the variables $y(t), w(t), f(t)$].

\section{A simplified version of the Goodwin model}
The Goodwin model~\citep{Goodwin1967} posits that a business cycle occurs from ``the interaction between a profit-led demand function and a reserve army distribution function''.

\begin{definition}
A \emph{profit-led demand function} means that $\partial \dot y / \partial w \equiv \partial F(y,w,f) / \partial w  < 0$ for all sufficiently large $t$. 
\end{definition}
\begin{remark} The economic reasoning is that an increase in the wage share $w(t)$ is associated with a decline in demand because investment is driven by the profit share $y(t) - w(t)$. 
\end{remark}

\subsection{Reserve army effect}

\begin{assumption}[Output varies positively with employment]
Higher levels of output $y(t)$ are associated with higher levels of employment.
\label{assumption:output_employment}
\end{assumption}

\begin{assumption}[Reserve army assumption]
``[A]s unemployment increases the bargaining power of labour is diminished, leading to a fall in the wage rate'' $w(t)$. And vice versa: as unemployment falls, the bargaining power of labour increases, leading to a rise in the wage rate $w(t)$. 
\label{assumption:reserve_army}
\end{assumption}

Combining Assumptions~\ref{assumption:output_employment} and~\ref{assumption:reserve_army} gives
\begin{assumption}[Large output $y(t)$ is associated with rising wage share $w(t)$]
When output $y(t)$ is large, employment is large by Assumption~\ref{assumption:output_employment}, so by Assumption~\ref{assumption:reserve_army} the wage share $w(t)$ rises. 
\label{assumption:output_wage}
\end{assumption}

Now we make Assumption~\ref{assumption:output_wage} mathematically precise:
\begin{definition}[Reserve army effect]
When $y(t)$ is sufficiently large, we have $\dot w > 0$. That is, there exists a constant $\kappa > 0$ such that $y(t) > \kappa \implies \dot w > 0$.
\label{def:reserve_army}
\end{definition}

\subsection{Profit squeeze theory of accumulation}


\begin{definition}[Profit squeeze]
For sufficiently large wage share $w(t)$, output $y(t)$ falls. That is, there exists a constant $\omega > 0$ such that $w(t) > \omega \implies \dot y < 0$.
\label{def:profit_squeeze}
\end{definition} 

This definition is motivated by the following assumptions (footnote 1 on page 2 of~\citet{Stockhammer2014}):
\begin{assumption}
``[W]orkers consume all wages and capitalists invest all profits, the marginal productivity of labour is constant and the capital-output ratio is fixed. Given these assumptions, a rise in the real wage will lead to a reduction in the growth of profits and - since investment is equal to profit - a reduction in the rate of output growth.''
\end{assumption}

\subsection{A simplified version of the Goodwin model}
Let 
\begin{subequations}
\begin{align}
\dot y &= F_\text{Goodwin}(y,w,f) :=  y (1-w) \label{eq:output_Goodwin}\\
\dot w &= G_\text{Goodwin}(y,w,f) := w(-c+ry), \label{eq:wage_Goodwin}
\end{align}
\label{eq:Goodwin_model}
\end{subequations}
where $c$ and $r$ are positive constants. 

Note that Eq.~\eqref{eq:wage_Goodwin} satisfies the reserve army effect (Definition~\ref{def:reserve_army}) with $\kappa := r/c$, and Eq.~\eqref{eq:output_Goodwin} satisfies the profit squeeze effect (Definition~\ref{def:profit_squeeze}) with $\omega := 1$. System~\eqref{eq:Goodwin_model} makes several assumptions that simplify the original Goodwin model~\cite{Goodwin1967}:
\begin{assumption}
``[I]n the original model, it is assumed that labour productivity and the labour force grow at a steady exogenous rate while in our formulation we assume instead that labour productivity is constant. Our model thus generates cycles in output around the steady state, while the original Goodwin model is a growth model.''
\end{assumption}

Figure~\ref{fig:Goodwin_orbit} shows one sample orbit of the Goodwin model.

\begin{figure}[htb]
\begin{center}
\includegraphics{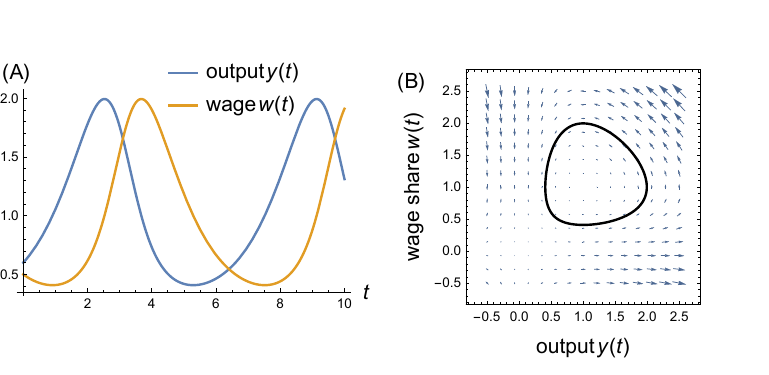}
\caption{Sample orbit of the Goodwin model~\eqref{eq:Goodwin_model} for $r=c=1$ and initial condition $y(0)=0.6,w(0)=0.5$. The time series in panel (A) show that peaks in output $y(t)$ precede peaks in the wage rate $w(t)$. Panel (B) shows the vector field given by System~\eqref{eq:Goodwin_model} using blue arrows, and the orbit (shown in black) moves counterclockwise.}
\label{fig:Goodwin_orbit}
\end{center}
\end{figure}

Figure~\ref{fig:Goodwin_variables} shows the feedbacks between the variables $y(t)$ and $w(t)$ in this Goodwin model.

\begin{figure}[htb]
\begin{center}
\includegraphics{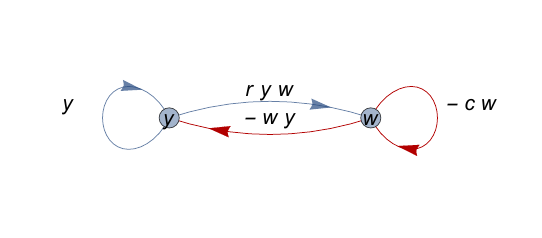}
\caption{Relationship among variables in the Goodwin model [System~\eqref{eq:Goodwin_model}]. Blue edges denote positive feedbacks; red edges denote negative feedbacks. Edge labels are the terms in the ODEs in System~\eqref{eq:Goodwin_model}.}
\label{fig:Goodwin_variables}
\end{center}
\end{figure}

\begin{definition}[Goodwin cycle]
A \emph{Goodwin cycle} is a counterclockwise closed orbit in $\left (y(t), w(t) \right )$ such that $\dot y$ depends on $w$ and $\dot w$ depends on $y$.
\end{definition}

\section{Minsky models}

\subsection{A simple Minsky model}
\begin{assumption}[Increasing financial fragility during booms]
During ``boom'' times, firms become more optimistic, so they take on more debt relative to their cash flows. Banks also become more willing to lend during booms.
\end{assumption}

\begin{definition}[Minsky financial fragility]
Financial fragility $f(t)$ exhibits the \emph{Minsky effect} if 
the rate of change of $f(t)$ increases with output $y(t)$, i.e., if $\partial \dot f / \partial y \equiv \partial H / \partial y > 0$.
\label{def:Minsky_f}
\end{definition}

\begin{definition}[Inverse relationship between output growth and financial fragility]
The rate of change of output decreases with financial fragility, i.e., 
$\partial \dot y / \partial f \equiv \partial H \left (y,w,f \right )/\partial f < 0$ for all $y(t),w(t),f(t) \geq 0$.
\label{def:output_growth_f}
\end{definition}

Let 
\begin{subequations}
\begin{align}
\dot y &= F_\text{Minsky}(y,w,f) :=  y (1-f) \label{eq:output_Minsky}\\
\dot f &= H_\text{Minsky}(y,w,f) := f(-1+p y), \label{eq:f_Minsky}
\end{align}
\label{eq:Minsky_model}
\end{subequations}
where $p$ is a positive constant. Note that Eq.~\eqref{eq:f_Minsky} satisfies the Minsky effect (Definition~\ref{def:Minsky_f}) provided that $f(t) \geq 0$, and note that Eq.~\eqref{eq:output_Minsky} satisfies the inverse relationship between output growth $\dot y$ and financial fragility $f(t)$ (Definition~\ref{def:output_growth_f}) provided that $f(t) \geq 0$.

System~\eqref{def:Minsky_f} exhibits countercyclical closed orbits much like the Goodwin model~\eqref{eq:Goodwin_model} does, as illustrated in Figure 4 of~\citet{Stockhammer2014}.

\begin{remark}
If we ignore the economic interpretations, then the Minsky model~\eqref{eq:Minsky_model} is a special case of the Goodwin model of the Goodwin model~\eqref{eq:Goodwin_model} with the replacement of variables $w \to f, c \to 1, r \to p$, so it is straightforward to see why closed orbits appear in both models.
\end{remark}

Figure~\ref{fig:Minsky_orbit} shows one sample orbit of the Minsky model.

\begin{figure}[htb]
\begin{center}
\includegraphics{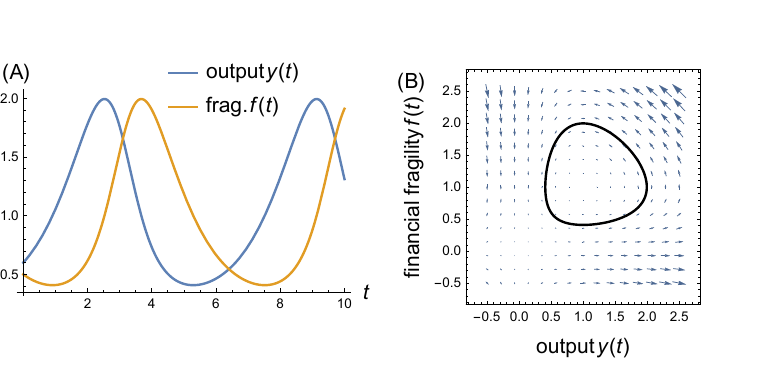}
\caption{Sample orbit of the Minsky model~\eqref{eq:Minsky_model} for $p=1$ and initial condition $y(0)=0.6,f(0)=0.5$. The time series in panel (A) show that peaks in output $y(t)$ precede peaks in the wage rate $w(t)$. Panel (B) shows the vector field given by System~\eqref{eq:Minsky_model} using blue arrows, and the orbit (shown in black) moves counterclockwise.}
\label{fig:Minsky_orbit}
\end{center}
\end{figure}

Figure~\ref{fig:Minsky_variables} shows the feedbacks between the variables $y(t)$ and $w(t)$ in this Minsky model.

\begin{figure}[htb]
\begin{center}
\includegraphics{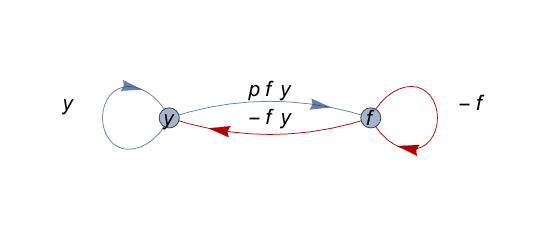}
\caption{Relationship among variables in the Minsky model [System~\eqref{eq:Minsky_model}]. Blue edges denote positive feedbacks; red edges denote negative feedbacks. Edge labels are the terms in the ODEs in System~\eqref{eq:Minsky_model}.}
\label{fig:Minsky_variables}
\end{center}
\end{figure}

\subsection{Minsky cycle with a reserve army effect}

We augment System~\eqref{eq:Minsky_model} with dynamics for the wage share that exhibits the reserve army effect:
\begin{align}
\dot w = G_\text{Minsky-reserve-army}(y,w,f) := w (-c+ry-w). \label{eq:wage_Minsky}
\end{align}

Note that the wage distribution $w(t)$ has no effect on demand. Said mathematically, $w(t)$ is \emph{enslaved} to $y(t)$; Eq.~\eqref{eq:wage_Minsky} is a \emph{slave equation}, while Eq.~\eqref{eq:output_Minsky} is called a \emph{master equation}. \citet{Stockhammer2014} also invoke the language of a \emph{scavenger} from biological models to describe the slave equation~\eqref{eq:wage_Minsky}.

Equation~\eqref{eq:wage_Minsky} is similar to the equation for $\dot w$ in the Goodwin model, Eq.~\eqref{eq:wage_Goodwin}, but with an extra $-w^2$ term ``to contain the rate of real wage increases and counteract the wage demands of workers at higher output''~\citep{Stockhammer2014}. 

\begin{definition}[Pseudo-Goodwin cycle]
A \emph{pseudo-Goodwin cycle} is a counterclockwise closed orbit in $\left (y(t), w(t) \right )$ such that $y(t)$ and $w(t)$ form a master--slave system.
\end{definition}

\begin{lemma}
A pseudo-Goodwin cycle necessarily has at least one more variable that is neither enslaved to $y(t)$ nor enslaves $y(t)$. 
\end{lemma}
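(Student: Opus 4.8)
The plan is a proof by contradiction resting on one elementary fact used twice: a scalar autonomous ODE $\dot x = \Phi(x)$ with $\Phi$ locally Lipschitz (which the ``sufficiently nice'' hypothesis on $F,G,H$ supplies) has only monotone solutions --- once $\dot x$ vanishes the solution is pinned to that equilibrium value for all time by uniqueness, so $\dot x$ cannot change sign. This will be combined with the observation that every coordinate of a closed orbit is a periodic function of $t$, hence either constant or non-monotone.

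First I would record what a pseudo-Goodwin cycle forces. A counterclockwise closed orbit in $(y,w)$ is by definition positively oriented and therefore encloses positive area, so $y(t)$ is non-constant; being periodic and non-constant it is non-monotone and attains an interior maximum at some time $t^\ast$ with $\dot y(t^\ast)=0$. By the master--slave hypothesis, $F=\dot y$ is independent of $w$ while $y$ does influence $w$. Now I claim $\dot y$ must depend on some variable other than $y$ and $w$: if it did not, then along the orbit $y$ would solve $\dot y = F(y)$, so the constant function $y\equiv y(t^\ast)$ would be the solution passing through the point reached at time $t^\ast$, and uniqueness would force $y$ to equal that constant along the whole orbit, contradicting that the orbit encloses area. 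Hence there is a genuine third variable $f$ on which $\dot y$ depends, and this already shows $f$ is not enslaved to $y$, since ``$f$ enslaved to $y$'' requires $\dot y$ to be independent of $f$.

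It remains to show $f$ does not enslave $y$, i.e. that $y$ in fact influences $f$, so that $f$ sits in a feedback loop with $y$ rather than driving it one-directionally. Suppose not: then $\dot f$ may not depend on $y$, and it may not depend on $w$ either (since $w$ is downstream of $y$, a dependence of $\dot f$ on $w$ would route $y$'s influence into $f$). In the three-variable system this forces $\dot f = H(f)$, a scalar autonomous ODE, so $f(t)$ is monotone; being periodic it is constant, $f\equiv f_0$. But then $\dot y = F(y,w,f_0)$, being independent of $w$, is a scalar autonomous ODE in $y$, so $y$ is monotone --- contradicting the previous paragraph. Therefore $y$ influences $f$; together with the fact that $\dot y$ depends on $f$, this shows $f$ is neither enslaved to $y$ nor enslaves $y$, which is what we wanted.

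The step I expect to be the real work is not any computation but getting the bookkeeping of ``enslaves''/``enslaved'' exactly right: these should be read in terms of directed paths in the dependency graph of $(F,G,H)$ --- $f$ enslaves $y$ means there is a directed path from $f$ to $y$ but none from $y$ to $f$, and symmetrically for ``enslaved to'' --- so that the scenario in which $y$ reaches $f$ only through the intermediate slave $w$ is correctly classified as ``$f$ does not enslave $y$.'' A minor side point to dispatch is the degenerate case of an orbit that collapses to a segment of zero enclosed area, which the requirement of a genuine counterclockwise loop excludes.
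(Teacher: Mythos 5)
Your proposal is correct and rests on exactly the same key fact as the paper's own (one-line) proof, namely that a scalar autonomous first-order ODE $\dot x = \mathcal{F}(x)$ has monotone solutions and hence cannot oscillate, which rules out a closed $(y,w)$ orbit driven by $y$ alone. Your additional bookkeeping on the dependency graph (showing the extra variable is neither enslaved to $y$ nor enslaves it) simply spells out what the paper leaves implicit, so this is the same argument, elaborated.
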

\begin{proof}
Follows from the fact that the solution to a first-order ODE $\dot x = \mathcal{F}(x)$ cannot oscillate. 
\end{proof}

Figure~\ref{fig:Minsky_reserve_army_orbit} shows one sample orbit of the Minsky model with a reserve-army effect model. Note that the $-w^2$ term in Eq.~\eqref{eq:wage_Minsky} damps the wages to zero in this example. 

\begin{figure}[htb]
\begin{center}
\includegraphics{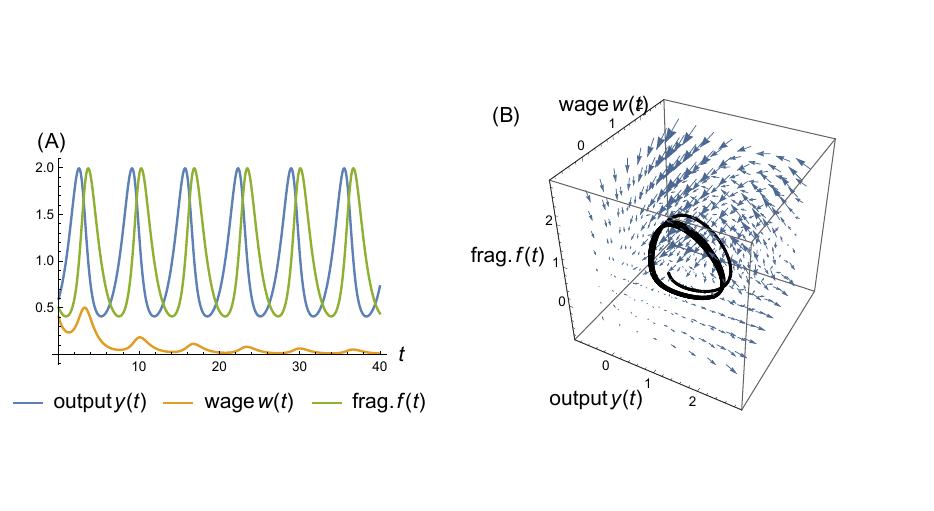}
\includegraphics{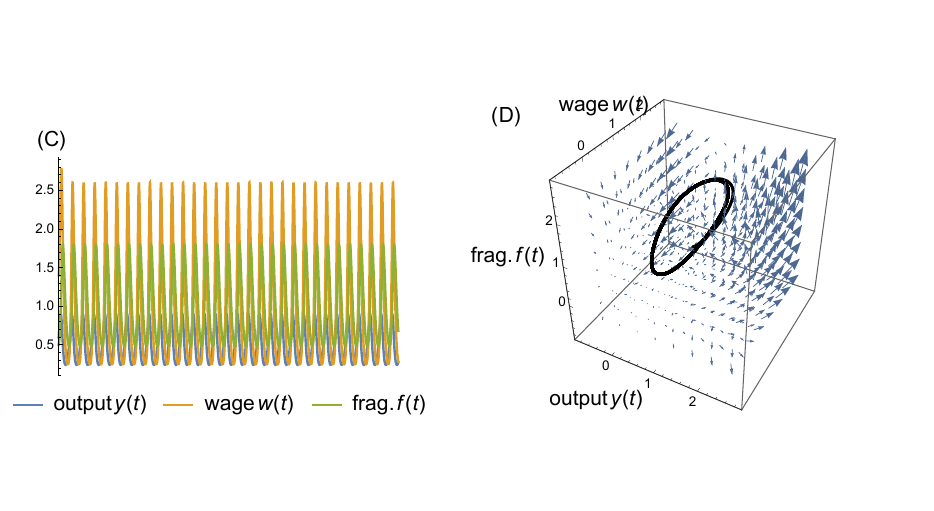}
\caption{Sample orbit of the Minsky model with a reserve army effect [System~\eqref{eq:Minsky_model} augmented with Eq.~\eqref{eq:wage_Minsky}] for $p=r=c=1$ and $40$ time steps (top row) and the parameters in~\citep{Stockhammer2014} ($p=2,r=5,c=3/2$, $200$ time steps, bottom row) and initial condition $y(0)=0.6,f(0)=0.5, w(0) = 0.4$. For the first set of parameters (top row), the wages $w(t)$ damp to zero, whereas for the second set of parameters (bottom row) the wage share $w(t)$ do not damp to zero (as in the example in~\citet{Stockhammer2014}).} 
\label{fig:Minsky_reserve_army_orbit}
\end{center}
\end{figure}

Figure~\ref{fig:Minsky_reserve_army_variables} shows the feedbacks between the variables $y(t), w(t), f(t)$ in this Minsky model with a reserve army effect.

\begin{figure}[htb]
\begin{center}
\includegraphics{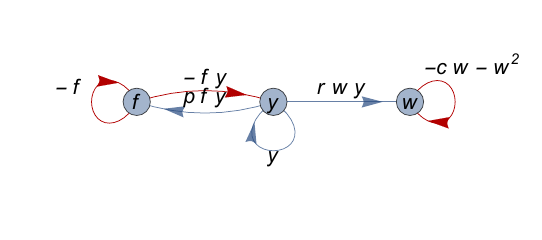}
\caption{Relationship among variables in the Minsky model with a reserve army effect [System~\eqref{eq:Minsky_model} augmented with Eq.~\eqref{eq:wage_Minsky}]. Blue edges denote positive feedbacks; red edges denote negative feedbacks. Edge labels are the terms in the ODEs in System~\eqref{eq:Minsky_model} together with Eq.~\eqref{eq:wage_Minsky}. Note that $w(t)$ is enslaved by $y(t)$ and $f(t)$, i.e., there is an edge from $y$ to $w$ but no edge from $w$ to $y$ nor from $w$ to $f$.}
\label{fig:Minsky_reserve_army_variables}
\end{center}
\end{figure}

\subsection{A Minsky model with distribution function and a wage-led effect in the demand function}

Now we retain Eqs.~\eqref{eq:f_Minsky} and~\eqref{eq:wage_Minsky} in the previous model (i.e. the Minsky model with a reserve army effect), but we modify the equation for the flow of output $\dot y$ [Eq.~\eqref{eq:output_Minsky}] by including a positive feedback from wage share $w(t)$ to demand and hence to output $y(t)$, namely, a new term $s y(t) w(t)$, where $s$ is a positive constant. The resulting system is
\begin{subequations}
\begin{align}
\dot y &= F_\text{Minsky-reserve-army}(y,w,f) :=  y (1-f + sw) \label{eq:output_Minsky_distribution_wage-led}\\
\dot w &= G_\text{Minsky-reserve-army}(y,w,f) := w (-c+ry-w). \label{eq:wage_Minsky_distribution_wage-led} \\
\dot f &= H_\text{Minsky}(y,w,f) := f(-1+p y). \label{eq:f_Minsky_distribution_wage-led}
\end{align}
\label{eq:Minsky_distribution_wage}
\end{subequations}

Figure~\ref{fig:Minsky_model_reserve_army_effect_wage-led_demand_sample_orbit} shows one sample orbit of this model. Note that the $-w^2$ term in Eq.~\eqref{eq:wage_Minsky} damps the wages to zero in this example.

\begin{figure}[htb]
\begin{center}
\includegraphics{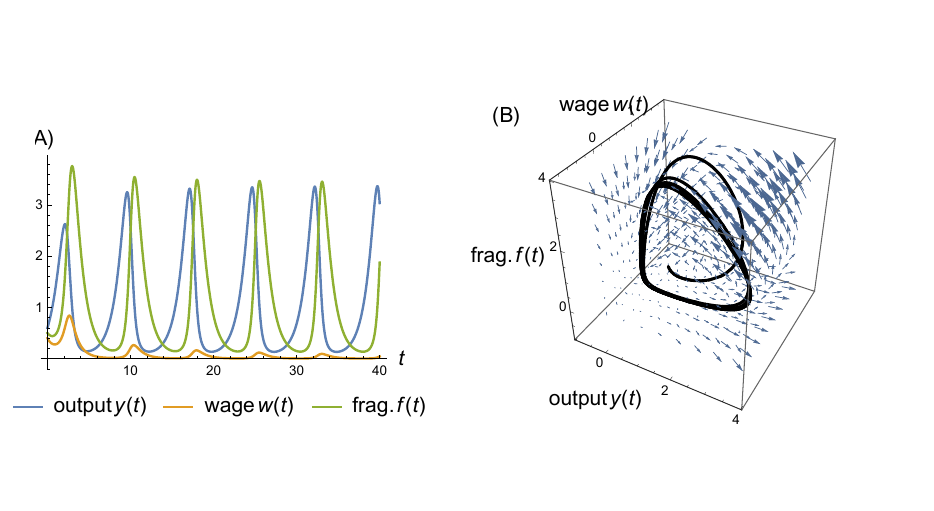}
\caption{Sample orbit of System~\eqref{eq:f_Minsky_distribution_wage-led} for $p=s=r=c=1$ and initial condition $y(0)=0.6,f(0)=0.5, w(0)=0.4$. The time series in panel (A) show that peaks in output $y(t)$ precede peaks in the wage rate $w(t)$ and that wages $w(t)$ oscillate yet damp to $0$. Panel (B) shows the vector field given by System~\eqref{eq:f_Minsky_distribution_wage-led} using blue arrows, and the orbit (shown in black) moves counterclockwise in $(y,w)$-space.}
\label{fig:Minsky_model_reserve_army_effect_wage-led_demand_sample_orbit}
\end{center}
\end{figure}

Figure~\ref{fig:Minsky_model_reserve_army_effect_wage-led_demand_variables} shows the feedbacks between the variables $y(t), w(t), f(t)$ in System~\eqref{eq:Minsky_distribution_wage}.

\begin{figure}[htb]
\begin{center}
\includegraphics{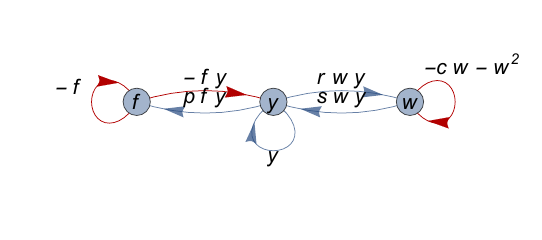}
\caption{Relationship among variables in the Minsky model with a reserve army effect [System~\eqref{eq:f_Minsky_distribution_wage-led}]. Blue edges denote positive feedbacks; red edges denote negative feedbacks. Edge labels are the terms in the ODEs in System~\eqref{eq:Minsky_distribution_wage}.}
\label{fig:Minsky_model_reserve_army_effect_wage-led_demand_variables}
\end{center}
\end{figure}

\subsection{Next steps}
Some possible ways to proceed:
\begin{enumerate}
\item Consider a fast--slow system with $|s| \ll 1$? \citet{Stockhammer2014} seem to have this case in mind because in their numerical example on page 12, they take $s := 0.02$.
\item \citet{Stockhammer2014} claim on page 12 that ``The cyclical behaviour of the model is still driven by the interaction between y and f (Figure 6b).'' I think this statement could be made more precise. It's certainly true when $s=0$, because then there is no feedback from $w$ to either $y$ or to $f$ (i.e., $w$ is enslaved by $y$ and $f$). If we let $s$ be negative but ``close'' to zero, then we have two possible ways for cycles to appear, in the interaction between $w$ and $y$ [as in the Goodwin model~\eqref{eq:Goodwin_model}] or in the interaction between $f$ and $y$ [as in the Minsky model~\eqref{eq:Minsky_model}]. Then we could ask whether there are stable limit cycles or centers or unstable orbits in that model.

\end{enumerate}


\newpage

\section{Hopf bifurcation in System~\eqref{eq:Minsky_distribution_wage} as $s$ crosses $0$}

Figure~\ref{fig:eigenvalues_Jacobian} shows the eigenvalues of the Jacobian matrix of System~\eqref{eq:Minsky_distribution_wage} evaluated at the unique interior fixed point $(y^*, w^*, f^*) = (1/p, r/p-c, r s/p - c s +1)$ as a function of $s$ from $-2$ (dark blue) to $2$ (light blue). Here, the parameters are the same as those used by~\citet[page 12]{Stockhammer2014}: $c=3/2, p = 2, r = 5$.

\begin{figure}[htbp]
\begin{center}
\includegraphics{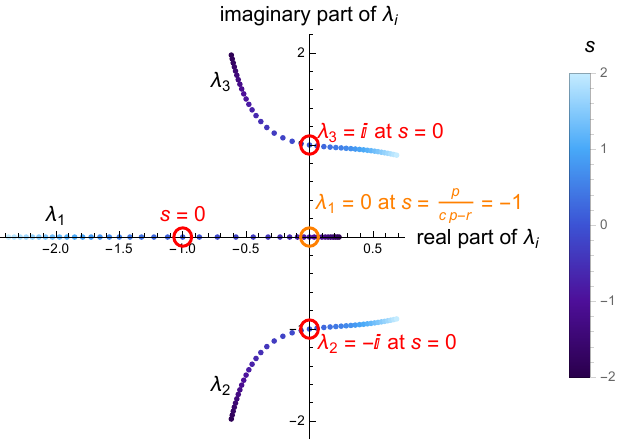}
\caption{Eigenvalues of the Jacobian matrix of System~\eqref{eq:Minsky_distribution_wage} evaluated at the unique interior fixed point $(y^*, w^*, f^*) = (1/p, r/p-c, r s/p - c s +1)$ as a function of $s$, which varies from $-2$ (dark blue) to $2$ (light blue) in this figure. Here, the parameters are the same as those used by~\citet[page 12]{Stockhammer2014}: $c=3/2, p = 2, r = 5$.
}
\label{fig:eigenvalues_Jacobian}
\end{center}
\end{figure}

At $s=0$, the first eigenvalue $\lambda_1 = -1$, and the other two eigenvalues $\lambda_2 = \bar \lambda_3$ cross the imaginary axis. This crossing of the imaginary axis by a pair of complex-conjugate eigenvalues suggests that a Hopf bifurcation occurs at $s=0$. Figure~\ref{fig:orbit_s=0} shows an orbit for $s=0$; the closed orbit of $y,f$ drives cyclic behavior in the enslaved variable $w$. 

For $s>0$, the complex-conjugate pair of eigenvalues have positive real part, i.e., $\Re \lambda_2 = \Re \lambda_3 > 0$, while the third eigenvalue $\lambda_3 < -1$. As a result, the dynamics (at least in the linear approximation near the interior fixed point) are unstable. 
This instability is also observed by~\citet[page 13]{Stockhammer2014}, who note that ``this cyclical motion is now unstable, so that outward spirals are generated in the phase space of all three pairs of variables''. Figure~\ref{fig:orbit_s=0p03} shows an orbit for $s=0.03$ that spirals outward. Eventually, the system blows up and escapes to $\infty$.

For $s < 0$ , the complex-conjugate pair of eigenvalues have negative real part, i.e., $\Re \lambda_2 = \Re \lambda_3 < 0$, and the other eigenvalue, $\lambda_1$, is negative if $s < p/(c p - r)$ and positive if $s>p/(c p - r)$ (see the orange text in Fig.~\ref{fig:eigenvalues_Jacobian}). Numerically, I find that all three variables converge to a fixed point (and oscillate on the way to the fixed point). In the appendix, some simulations and illustrations are presented.

\bibliographystyle{plainnat}

\bibliography{Business_cycle_bibliography}

\section{Appendix}

\newpage

\begin{figure}[htbp]
\begin{center}
\includegraphics[height=18.7cm]{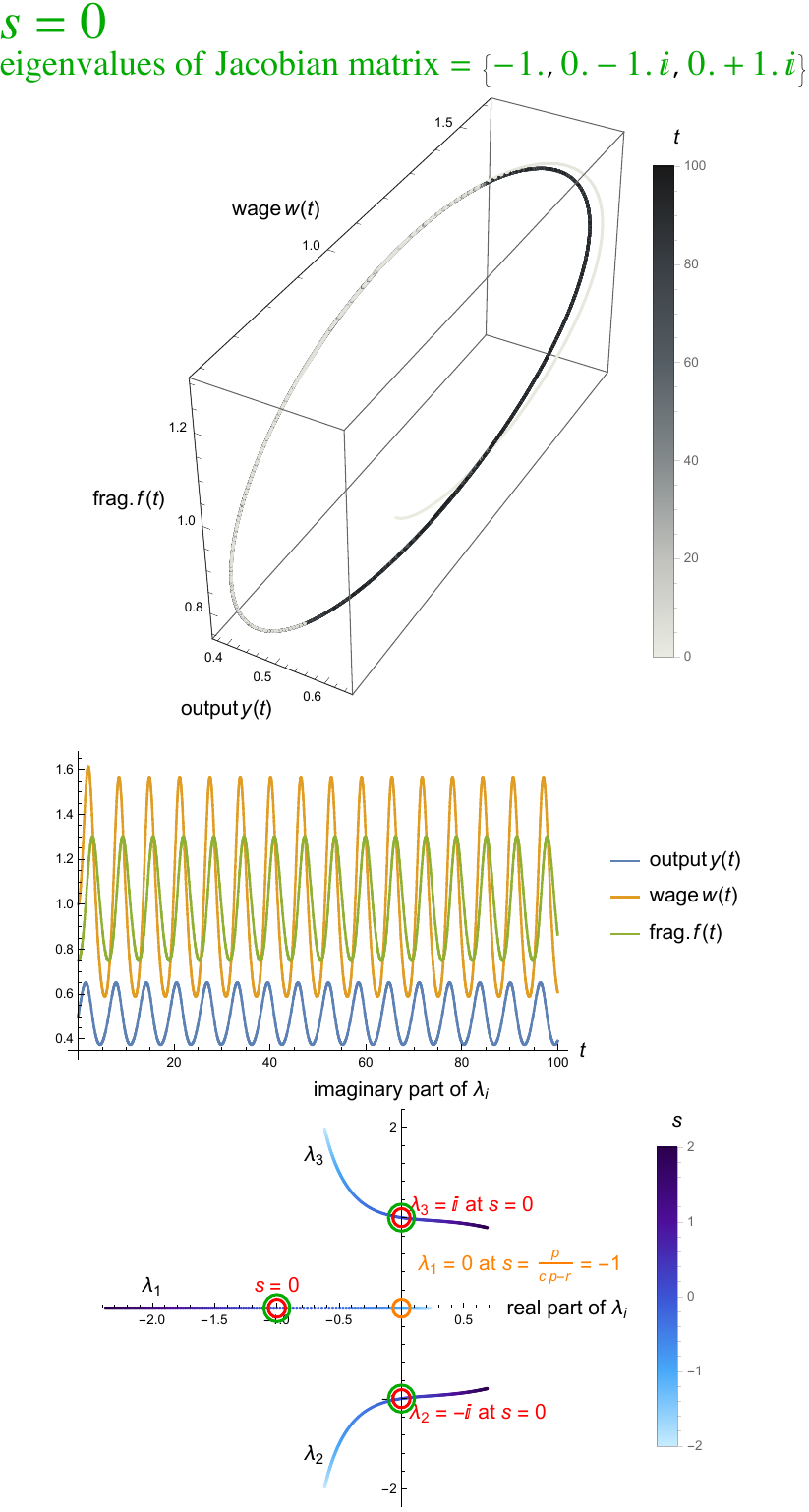}
\caption{Orbit for $s=0$. 
Time increases from $0$ (light gray) to $200$ (dark gray). The cycle in $y$ and $f$ drives cyclical behavior in the enslaved variable $w$. 
Here, the parameters are the same as those used by~\citet[page 12]{Stockhammer2014}: $c=3/2, p = 2, r = 5,y(0) = 1/2, f(0) = 3/4, w(0) = 1$. 
This case $s=0$ corresponds to the Minsky model with a reserve army effect [System~\eqref{eq:Minsky_model} augmented with Eq.~\eqref{eq:wage_Minsky}]; two example orbits of this model are shown in Fig.~\ref{fig:Minsky_reserve_army_orbit} (one of which has $w(t)$ being damped to zero over time). 
}
\label{fig:orbit_s=0}
\end{center}
\end{figure}

\begin{figure}[htbp]
\begin{center}
\includegraphics[height=20cm]{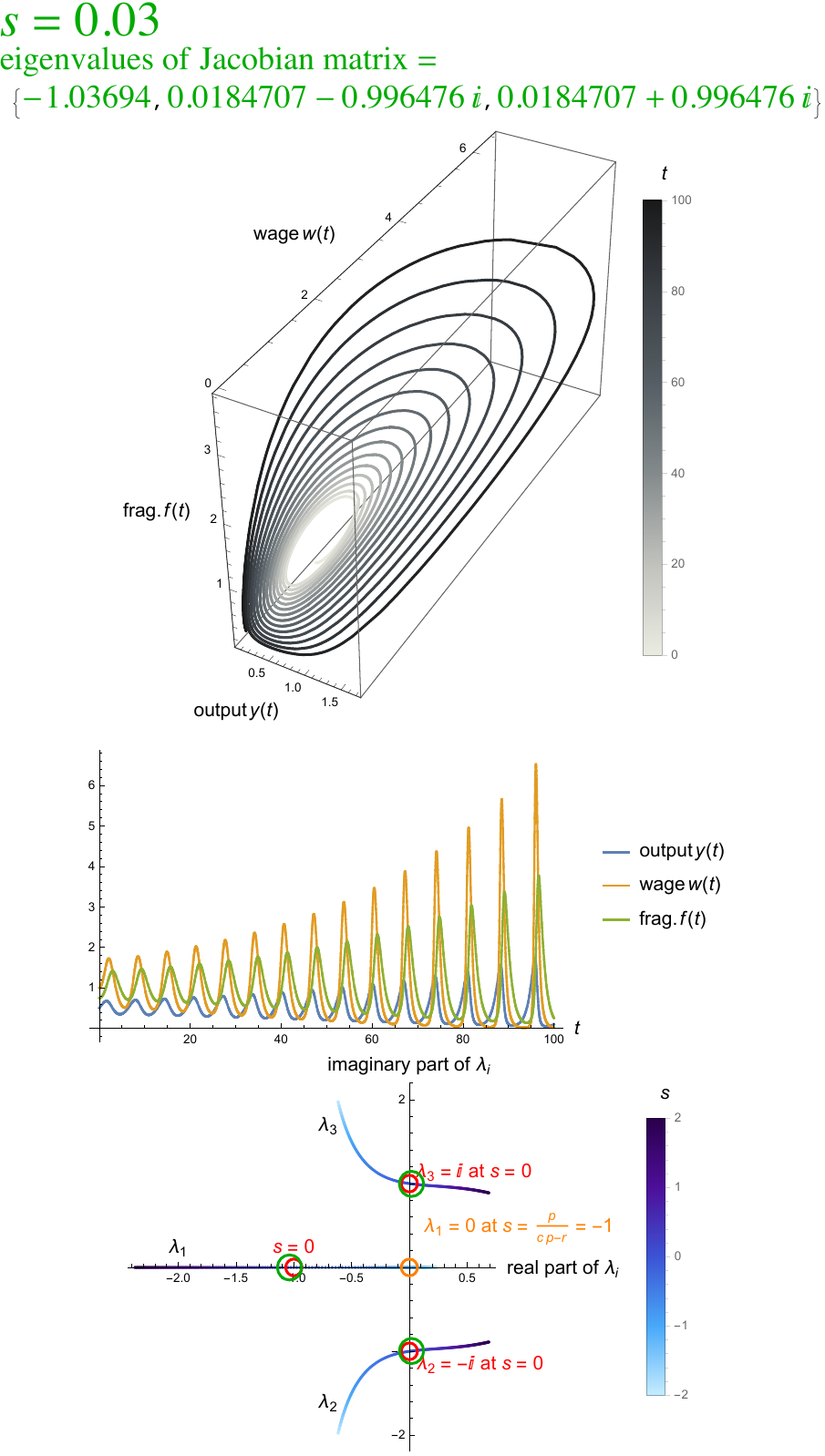}
\caption{Orbit for $s=0.03$. The dynamics are an unstable spiral. 
Time increases from $0$ (light gray) to $200$ (dark gray). 
Here, the parameters are the same as those used by~\citet[page 12]{Stockhammer2014}: $c=3/2, p = 2, r = 5,y(0) = 1/2, f(0) = 3/4, w(0) = 1$. 
}
\label{fig:orbit_s=0p03}
\end{center}
\end{figure}

\begin{figure}[htbp]
\begin{center}
\includegraphics[height=20cm]{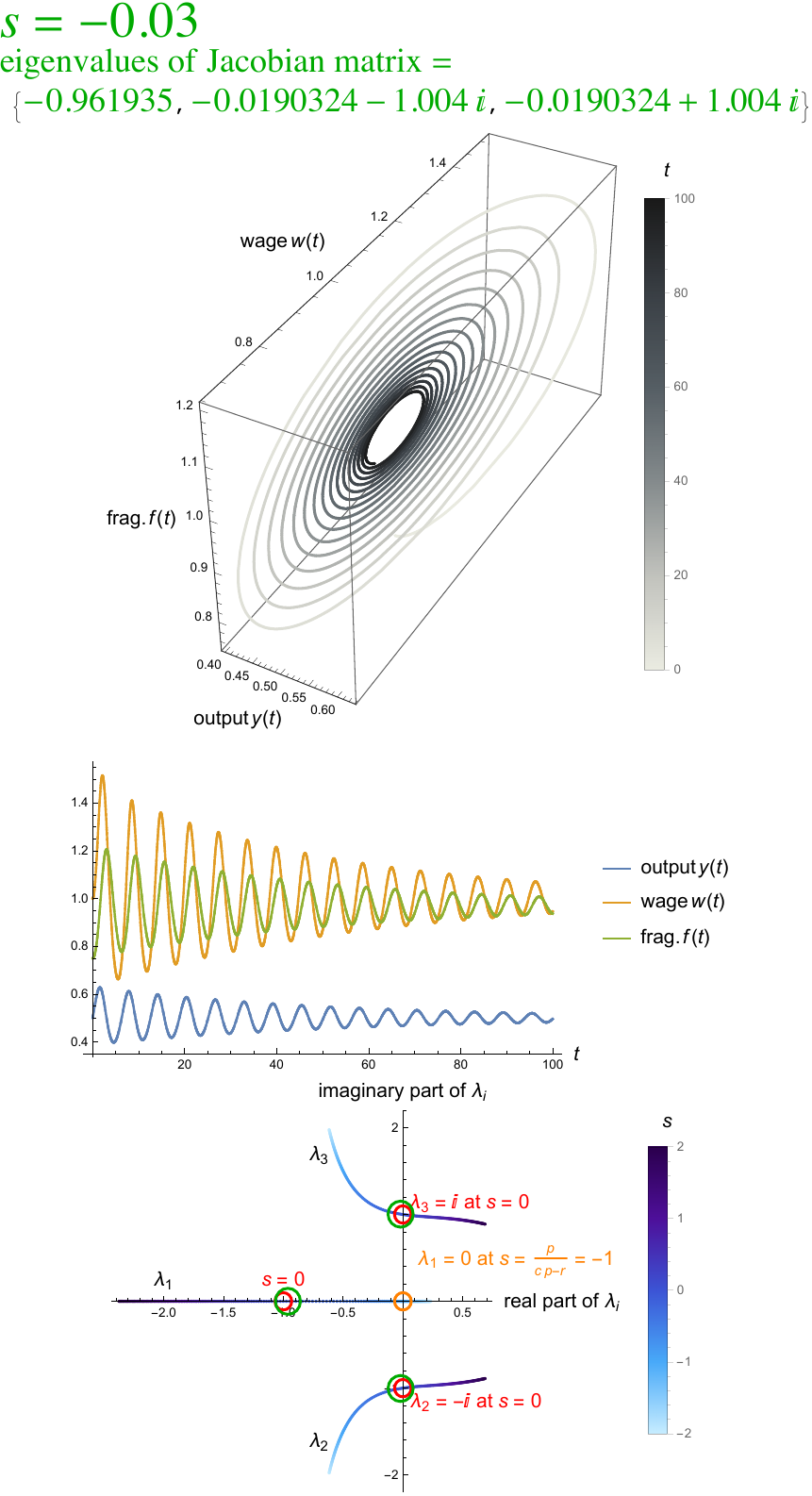}
\caption{Orbit for $s=-0.01$. 
Time increases from $0$ (light gray) to $200$ (dark gray). The dynamics spiral to a fixed point. 
Here, the parameters are the same as those used by~\citet[page 12]{Stockhammer2014}: $c=3/2, p = 2, r = 5,y(0) = 1/2, f(0) = 3/4, w(0) = 1$. 
}
\label{fig:orbit_s=-0p03}
\end{center}
\end{figure}

\clearpage


\end{document}